\documentclass[hidelinks]{siamart251216}

\usepackage{mathtools}
\usepackage{amssymb}
\usepackage{amsopn}
\usepackage{microtype}
\usepackage{enumitem}
\usepackage{booktabs}

\newsiamthm{conjecture}{Conjecture}
\newsiamremark{remark}{Remark}
\newsiamremark{example}{Example}

\DeclareMathOperator{\Hilb}{Hilb}

\newcommand{\kk}{\mathbf{k}}

\newcommand{\D}{\mathcal{D}}

\title{Betti Numbers of Cut Complexes of Squared Paths and a Recurrence Conjecture}
\author{Yutong Zhang\thanks{Corresponding author. School of Mathematics, Sichuan University, 24 First Loop Road South Section I, Chengdu, Sichuan 610064, China. Email: \email{yutongzhang@stu.scu.edu.cn}.}
\and Yaoran Yang\thanks{School of Mathematics, Sichuan University, 24 First Loop Road South Section I, Chengdu, Sichuan 610064, China. Email: \email{yangyaoran@stu.scu.edu.cn}.}}
\headers{BETTI NUMBERS OF CUT COMPLEXES OF SQUARED PATHS}{Y. ZHANG AND Y. YANG}
\hypersetup{
  pdftitle={Betti Numbers of Cut Complexes of Squared Paths and a Recurrence Conjecture},
  pdfauthor={Yutong Zhang and Yaoran Yang},
  pdfsubject={Cut complexes of squared paths, Betti numbers, and finite-difference recurrences},
  pdfkeywords={cut complex, squared path, Betti number, shellable complex, finite difference, algebraic combinatorics}
}

\begin{document}

\maketitle

\begin{abstract}
For a graph $G$ on $[n]$, the $k$-cut complex $\Delta_k(G)$ has facets $[n]\setminus T$, where $T$ ranges over the $k$-subsets that induce disconnected subgraphs of $G$. Bayer, Denker, Jeli\'c Milutinovi\'c, Sundaram, and Xue proved that $\Delta_k(P_n^2)$ is shellable for $n\ge k+3$ and conjectured a finite-difference recurrence for its top reduced Betti number along each diagonal $n-k=r$. We prove the exact formula
\[
 \beta(k,n)=\binom{n-1}{k-1}
 -\sum_{j=0}^{\min\{k-1,n-k\}}\binom{k-1}{j}(n-k-j+1)+(n-k)
\]
for $k\ge2$ and $n\ge k+3$. The proof rests on a complete classification of complements of cardinality at least $k$ that contain no disconnected $k$-subset: every such complement has size $k$ or $k+1$ and is, respectively, a connected $k$-subset of $P_n^2$ or an interval of $k+1$ consecutive vertices. For fixed $r\ge3$, the formula is the restriction of a polynomial in $k$ of degree exactly $r-1$. Hence its $r$th backward difference vanishes identically and its $(r-1)$st backward difference is the constant $r-2$. On the topologically defined sequence, these identities hold for $k\ge r+2$ and $k\ge r+1$, respectively. We also obtain the conjectured closed forms for $k=4,5$, the complete face enumerator, and the associated $h$-polynomial and Stanley--Reisner Hilbert series.
\end{abstract}

\begin{keywords}
cut complex, squared path, Betti number, shellable complex, finite difference, algebraic combinatorics
\end{keywords}

\begin{MSCcodes}
05E45, 05C69, 05A15, 13F55
\end{MSCcodes}

\section{Introduction}\label{sec:introduction}

Cut complexes of graphs lie at the interface of graph theory, algebraic combinatorics, and topological combinatorics. They are naturally related to Alexander-dual constructions for squarefree monomial ideals; see \cite{Froberg1985,EagonReiner1998} and the subsequent development of cut complexes in \cite{BayerTotal2024,BayerCut2024,BayerCutII2025}. Our purpose is to resolve the finite-difference recurrence conjectured in \cite[Conjecture 4.10]{BayerCutII2025} for the top reduced Betti numbers of cut complexes of squared paths.

Let $G$ be a finite simple graph on $[n]=\{1,\ldots,n\}$. For $2\le k\le n$, its $k$-cut complex is
\begin{equation}\label{eq:cut-complex}
 \Delta_k(G)=\big\langle [n]\setminus T:T\in\tbinom{[n]}{k},\,G[T]\text{ is disconnected}\big\rangle.
\end{equation}
If no disconnected $k$-subset exists, we regard $\Delta_k(G)$ as the void complex. Otherwise every facet has cardinality $n-k$. The squared path $P_n^2$ is the graph on $[n]$ in which $i$ and $j$ are adjacent exactly when $1\le |i-j|\le2$.

Bayer et al.\ proved that, for $k\ge2$ and $n\ge k+3$, the complex $\Delta_k(P_n^2)$ is shellable and homotopy equivalent to a wedge of spheres of dimension $n-k-1$ \cite[Theorem 4.4]{BayerCutII2025}. Its reduced homology is therefore concentrated in top degree. For a field $\kk$, set $\beta(k,n)=\dim_{\kk}\widetilde H_{n-k-1}(\Delta_k(P_n^2);\kk)$ for $k\ge2$ and $n\ge k+3$. This number is independent of $\kk$, because the cited theorem determines the homotopy type.

For fixed $r=n-k\ge3$, the conjecture of Bayer et al.\ asserts that
\begin{equation}\label{eq:original-recurrence}
 \beta(k,k+r)=\sum_{i=1}^{r}(-1)^{i-1}\binom{r}{i}\beta(k-i,k-i+r),
 \qquad k\ge r+3.
\end{equation}
Our principal result is the following exact enumeration. Throughout, for $a\in\mathbb Z_{\ge0}$ and $b\in\mathbb Z$, we use the convention $\binom ab=0$ when $b<0$ or $b>a$.

\begin{theorem}[Exact Betti formula]\label{thm:main}
Let $k\ge2$, let $n\ge k+3$, and put $r=n-k$. Then
\begin{equation}\label{eq:main-formula}
 \beta(k,n)=\binom{n-1}{k-1}
 -\sum_{j=0}^{\min\{k-1,r\}}\binom{k-1}{j}(r-j+1)+r.
\end{equation}
Equivalently,
\begin{equation}\label{eq:main-diagonal-formula}
 \beta(k,k+r)=\binom{k+r-1}{r}
 -\sum_{j=0}^{r}(r-j+1)\binom{k-1}{j}+r.
\end{equation}
\end{theorem}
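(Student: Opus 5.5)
Since $\Delta=\Delta_k(P_n^2)$ is shellable of pure dimension $n-k-1$ and homotopy equivalent to a wedge of $(n-k-1)$-spheres \cite[Theorem 4.4]{BayerCutII2025}, we will compute $\beta(k,n)$ as the reduced Euler characteristic up to sign:
\[
\beta(k,n)=(-1)^{n-k-1}\widetilde\chi(\Delta)=(-1)^{n-k-1}\sum_{F\in\Delta}(-1)^{|F|-1}.
\]
We will organize the sum by complements. A set $F\subseteq[n]$ is a face exactly when its complement $C=[n]\setminus F$ contains a disconnected $k$-subset. Faces therefore correspond to complements $C$ with $|C|\ge k$, minus the ``bad'' complements that contain no disconnected $k$-subset. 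We get
\[
\widetilde\chi(\Delta)=\sum_{\substack{C\subseteq[n]\\|C|\ge k}}(-1)^{n-|C|-1}\;-\;\sum_{C\text{ bad}}(-1)^{n-|C|-1}.
\]
The first sum is the standard alternating sum $\sum_{m\ge k}(-1)^{n-m-1}\binom nm$. Its absolute value is $\binom{n-1}{k-1}$, with sign $(-1)^{n-k-1}$. This produces the leading term $\binom{n-1}{k-1}$.

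The main step, and the main obstacle, is classifying the bad complements. We will show that the bad sets of size $k$ are exactly the connected $k$-subsets of $P_n^2$. We will also show that the only bad set of size $k+1$ is an interval of $k+1$ consecutive vertices, and that no bad set has size $\ge k+2$.

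The argument uses the structure of connectivity in $P_n^2$: a subset $T=\{t_1<\dots<t_k\}$ is connected iff all consecutive gaps satisfy $t_{i+1}-t_i\le2$. Let $C=\{c_1<\dots<c_m\}$ with $m\ge k+1$ be bad. First, every $k$-window $c_i,\dots,c_{i+k-1}$ must have small gaps. Second, dropping an interior element of a window merges two gaps, and the merged gap must still be $\le2$. Together these force all gaps of $C$ to be $1$. We then choose $T$ to consist of $c_1,\dots,c_{k-1}$ together with the last element $c_m$. When $m\ge k+2$, this gives a gap of size $\ge3$, so $T$ is disconnected. When $m=k+1$, the same choice fails unless $C$ is an interval. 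We must also make sure the hypothesis $k\ge2$ covers small cases such as $k=2$.

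It remains to count the connected $k$-subsets of $[n]$. These have gap sequences in $\{1,2\}^{k-1}$. If $j$ of the gaps equal $2$, the span is $k-1+j$, so there are $n-k-j+1=r-j+1$ placements, provided $j\le r$. The number of connected $k$-subsets is therefore $\sum_{j=0}^{\min\{k-1,r\}}\binom{k-1}{j}(r-j+1)$. The number of $(k+1)$-intervals is $n-k=r$.

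Size-$k$ bad complements carry sign $(-1)^{n-k-1}$ in $\widetilde\chi$, and size-$(k+1)$ bad complements carry the opposite sign. Removing them from the first sum therefore gives
\[
\beta(k,n)=\binom{n-1}{k-1}-\sum_{j}\binom{k-1}{j}(r-j+1)+r,
\]
which is \eqref{eq:main-formula}. The diagonal form \eqref{eq:main-diagonal-formula} follows from $\binom{n-1}{k-1}=\binom{k+r-1}{r}$. The upper limit may be replaced by $r$ because $\binom{k-1}{j}=0$ for $j>k-1$.
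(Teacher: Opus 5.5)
Your route is the paper's: the complement criterion, the classification of bad complements by size, the gap-pattern count of connected $k$-subsets, the alternating sum $\sum_{p=0}^{r}(-1)^p\binom np=(-1)^r\binom{n-1}{r}$, and the wedge-of-spheres theorem to convert $\widetilde\chi$ into $\beta$. Writing $\widetilde\chi$ as a sum over complements is the paper's face enumerator evaluated at $-1$, and your signs are right: $-z_{k,n}$ from size-$k$ bad sets and $+r$ from size-$(k+1)$ ones.

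The substantive omission is in the classification. You only argue that a bad $(k+1)$-set is an interval. The term $+r$ also needs the converse, that \emph{every} interval $\{s,\dots,s+k\}$ is bad; without it you only get $q_{k+1}(P_n^2,k)\le r$. The paper supplies this directly. Deleting one vertex of an interval leaves a $k$-set whose consecutive gaps are all $1$ except at most one equal to $2$, and such a set is connected by the gap criterion.

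The forward direction also needs to be stated more carefully. Deleting an interior element of a $k$-window leaves only $k-1$ vertices, so you do not get a $k$-subset. When $k=2$, a $k$-window has no interior element at all, which is exactly the small case you were worried about. Run the argument on the $(k+1)$-set $C=\{c_1<\dots<c_{k+1}\}$ itself, as the paper does. For $1\le i\le k-1$, deleting $c_{i+1}$ makes $c_i,c_{i+2}$ consecutive in a $k$-subset, so $c_{i+2}-c_i\le2$. This forces $c_{i+1}-c_i=c_{i+2}-c_{i+1}=1$, and it works for all $k\ge2$.

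By contrast, the test set $\{c_1,\dots,c_{k-1},c_{k+1}\}$ does not detect non-intervals when $k\ge3$. For $k=3$ and $C=\{1,3,4,5\}$ it is the connected set $\{1,3,5\}$, although $C$ is not an interval (and not bad, since $\{1,4,5\}$ is disconnected). So your sentence about the case $m=k+1$ cannot carry that case.

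For $m\ge k+2$ no gap analysis is needed. The elements $c_k,\dots,c_{m-1}$ lie strictly between $c_{k-1}$ and $c_m$, so $c_m-c_{k-1}\ge m-k+1\ge3$ directly. This is the mirror image of the paper's choice $\{c_1\}\cup\{c_{s+2},\dots,c_{k+s}\}$ for $|C|=k+s$.
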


The proof is organized around complements. If $F\subseteq[n]$ and $C=[n]\setminus F$, then $F$ is a face of $\Delta_k(P_n^2)$ if and only if $C$ contains a disconnected $k$-subset. We classify the complementary sets for which every $k$-subset is connected. There are only two nontrivial cardinality levels: a bad complement of size $k$ is a connected $k$-subset, a bad complement of size $k+1$ is an interval of $k+1$ consecutive vertices, and no bad complement has size at least $k+2$. This classification determines the entire face enumerator and hence the reduced Euler characteristic. The wedge-of-spheres theorem then converts that Euler characteristic into \eqref{eq:main-formula}.

For fixed $r\ge3$, the right-hand side of \eqref{eq:main-diagonal-formula} is a polynomial in $k$ of degree exactly $r-1$. The resulting finite-difference statement is slightly stronger than the original conjecture: the recurrence \eqref{eq:original-recurrence} is valid whenever all Betti numbers appearing in it are defined, namely for $k\ge r+2$. Moreover, the $(r-1)$st backward difference equals $r-2$ on its natural topological range $k\ge r+1$. Thus order $r$ is sharp among annihilating operators that are pure powers of the backward-difference operator.

The formula also proves \cite[Conjecture 4.11]{BayerCutII2025}. Namely, for $n\ge7$,
\begin{equation}\label{eq:intro-k4}
 \beta(4,n)=3+8\binom{n-7}{1}+6\binom{n-7}{2}+\binom{n-7}{3},
\end{equation}
and, for $n\ge8$,
\begin{equation}\label{eq:intro-k5}
 \beta(5,n)=6+20\binom{n-8}{1}+21\binom{n-8}{2}
 +7\binom{n-8}{3}+\binom{n-8}{4}.
\end{equation}

Section~\ref{sec:complements} develops the complement criterion and classifies all bad complements. Section~\ref{sec:enumeration} derives the face enumerator, the reduced Euler characteristic, and the exact Betti formula. Section~\ref{sec:diagonals} proves the recurrence, determines its optimal order, and gives the precise one-sided generating-function formulation. Section~\ref{sec:consequences} records the fixed-parameter formulas, the layerwise description of nonfaces, and the Stanley--Reisner enumerators. Section~\ref{sec:conclusion} concludes.

\section{Complements and connected subsets}\label{sec:complements}

\subsection{The complementary-set criterion}

For a graph $G$ on $V=[n]$, let $\D_k(G)$ denote the family of $k$-subsets $T\subseteq V$ for which $G[T]$ is disconnected. Then \eqref{eq:cut-complex} may be written as $\Delta_k(G)=\langle V\setminus T:T\in\D_k(G)\rangle$.

\begin{lemma}[Complement criterion]\label{lem:complement-criterion}
Let $G$ be a graph on $V=[n]$, let $2\le k\le n$, and let $F\subseteq V$. Then
\[
 F\in\Delta_k(G)
 \quad\Longleftrightarrow\quad
 \text{there exists }T\in\tbinom{V\setminus F}{k}\text{ such that }G[T]\text{ is disconnected}.
\]
\end{lemma}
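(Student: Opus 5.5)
The plan is to unwind the definition of a simplicial complex generated by facets. By \eqref{eq:cut-complex}, rewritten as $\Delta_k(G)=\langle V\setminus T:T\in\D_k(G)\rangle$, a set $F\subseteq V$ is a face exactly when it lies inside some generator. So $F\in\Delta_k(G)$ if and only if there is $T\in\D_k(G)$ with $F\subseteq V\setminus T$. I will prove the two implications separately, using only the elementary equivalence $F\subseteq V\setminus T\iff T\subseteq V\setminus F$ for subsets of $V$.

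For the forward direction, suppose $F\in\Delta_k(G)$. Then $F\subseteq V\setminus T$ for some $k$-subset $T$ with $G[T]$ disconnected. Taking complements gives $T\subseteq V\setminus F$. Hence $T\in\binom{V\setminus F}{k}$ and $G[T]$ is disconnected, as required.

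For the reverse direction, suppose $T\subseteq V\setminus F$ has $|T|=k$ and $G[T]$ disconnected. Then $T\in\D_k(G)$, so $V\setminus T$ is a generator of $\Delta_k(G)$. Since $T\subseteq V\setminus F$, we get $F\subseteq V\setminus T$. Faces of a complex are closed under taking subsets, so $F\in\Delta_k(G)$.

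The only point needing care is the void case. If $\D_k(G)=\emptyset$, then $\Delta_k(G)$ is void by convention and has no faces at all, not even $\emptyset$. In that case the right-hand side also fails for every $F$, because no disconnected $k$-subset exists anywhere in $V$. So the equivalence still holds, and I expect no real obstacle beyond stating this convention explicitly.
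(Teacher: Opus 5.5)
Your proposal is correct and follows essentially the same argument as the paper: you unwind the definition of the generated complex and use $F\subseteq V\setminus T\iff T\subseteq V\setminus F$ in both directions. Your explicit check of the void case, where $\D_k(G)=\emptyset$, is a harmless addition that the paper leaves implicit; it is consistent with the paper's convention.
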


\begin{proof}
If $F\in\Delta_k(G)$, then, by the definition of a generated simplicial complex, $F$ is contained in a generating face $V\setminus T$ for some $T\in\D_k(G)$. Hence $T\subseteq V\setminus F$, and $T$ is the required disconnected $k$-subset. Conversely, if a disconnected $k$-subset $T$ is contained in $V\setminus F$, then $F\subseteq V\setminus T$, so $F$ belongs to $\Delta_k(G)$.
\end{proof}

A set $C\subseteq V$ is called \emph{$k$-bad} if $|C|\ge k$ and every $k$-subset of $C$ induces a connected subgraph of $G$. For every integer $m$, let $q_m(G,k)$ be the number of $k$-bad subsets of $V$ having cardinality $m$. In particular, $q_m(G,k)=0$ for $m<k$ and for $m>n$.

Let $F_{\Delta}(x)=\sum_{F\in\Delta}x^{|F|}$ be the face enumerator of a simplicial complex $\Delta$. For $0\le p\le n-k$, Lemma~\ref{lem:complement-criterion} gives $f_{p-1}(\Delta_k(G))=\binom np-q_{n-p}(G,k)$, where $f_{p-1}$ counts faces of cardinality $p$. Consequently,
\begin{equation}\label{eq:general-face-enumerator}
 F_{\Delta_k(G)}(x)=\sum_{p=0}^{n-k}\left(\binom np-q_{n-p}(G,k)\right)x^p.
\end{equation}
Whenever $\Delta_k(G)$ is nonvoid, it contains the empty face; with the usual empty-face convention, $\widetilde\chi(\Delta_k(G))=-F_{\Delta_k(G)}(-1)$.

\subsection{Connected subsets of a squared path}

Write a nonempty subset of $[n]$ as $A=\{a_1<\cdots<a_t\}$. The following elementary criterion is the basis of all subsequent counting.

\begin{lemma}[Gap criterion]\label{lem:gap-criterion}
The induced graph $P_n^2[A]$ is connected if and only if $a_{i+1}-a_i\le2$ for every $1\le i<t$. Equivalently, it is disconnected if and only if $a_{i+1}-a_i\ge3$ for at least one $i$.
\end{lemma}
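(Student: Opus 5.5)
The plan is to prove both directions directly from the definition of $P_n^2$, where $i\sim j$ exactly when $1\le|i-j|\le 2$. A path inside $P_n^2[A]$ moves by steps of size $1$ or $2$ and cannot jump over a gap of size at least $3$. The natural approach is therefore to show that consecutive elements of $A$ at distance at most $2$ are always adjacent, and that a gap of size at least $3$ splits $A$ into two pieces with no edges between them. The second formulation in the statement is just the logical negation of the first, so it needs no separate argument once the first is established.

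For the ``if'' direction, suppose $a_{i+1}-a_i\le 2$ for every $1\le i<t$. Each consecutive pair satisfies $1\le a_{i+1}-a_i\le 2$, so $a_i a_{i+1}$ is an edge of $P_n^2[A]$. Then $a_1,a_2,\ldots,a_t$ is a walk through every vertex of $A$, which gives connectivity. The case $t=1$ is trivially connected.

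For the ``only if'' direction, argue contrapositively. Suppose $a_{i+1}-a_i\ge 3$ for some $i$, and put $L=\{a_1,\ldots,a_i\}$ and $R=\{a_{i+1},\ldots,a_t\}$. Both sets are nonempty. For any $x\in L$ and $y\in R$ we have $y-x\ge a_{i+1}-a_i\ge 3$, so $x$ and $y$ are not adjacent. Hence there is no edge between $L$ and $R$, and $P_n^2[A]$ is disconnected.

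No step is a genuine obstacle. The only point needing care is the claim that no edge crosses the gap, and this follows from the monotonicity inequality $y-x\ge a_{i+1}-a_i$ for $x\le a_i<a_{i+1}\le y$.
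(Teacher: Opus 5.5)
Your proof is correct and matches the paper's argument essentially step for step. Consecutive elements at distance at most $2$ are adjacent and so form a path through $A$. Conversely, a gap $a_{i+1}-a_i\ge3$ separates $\{a_1,\ldots,a_i\}$ from $\{a_{i+1},\ldots,a_t\}$, since $y-x\ge a_{i+1}-a_i\ge 3$ for every cross pair.
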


\begin{proof}
If every consecutive gap is at most two, then each pair $a_i,a_{i+1}$ is an edge of $P_n^2$. Hence $a_1-a_2-\cdots-a_t$ is a path contained in the induced graph, which is therefore connected.

Conversely, suppose that $a_{i+1}-a_i\ge3$ for some $i$. If $u\in\{a_1,\ldots,a_i\}$ and $v\in\{a_{i+1},\ldots,a_t\}$, then $v-u\ge a_{i+1}-a_i\ge3$. Thus no edge of $P_n^2[A]$ joins the two nonempty sets $\{a_1,\ldots,a_i\}$ and $\{a_{i+1},\ldots,a_t\}$. The induced graph is disconnected.
\end{proof}

\begin{lemma}[Connected $k$-subsets]\label{lem:connected-count}
For $2\le k\le n$, the number $z_{k,n}$ of connected $k$-subsets of $P_n^2$ is
\begin{equation}\label{eq:zkn}
 z_{k,n}=\sum_{j=0}^{\min\{k-1,n-k\}}\binom{k-1}{j}(n-k-j+1).
\end{equation}
\end{lemma}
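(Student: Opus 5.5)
By Lemma~\ref{lem:gap-criterion}, a $k$-subset $A=\{a_1<\cdots<a_k\}$ is connected exactly when each of its $k-1$ consecutive gaps $g_i=a_{i+1}-a_i$ lies in $\{1,2\}$. I will therefore count connected $k$-subsets by fixing the gap sequence and then the starting point.

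First I classify the gap sequences. Let $j$ be the number of indices $i$ with $g_i=2$. There are $\binom{k-1}{j}$ gap sequences with exactly $j$ twos. For each of them the span is $a_k-a_1=\sum_i g_i=(k-1)+j$.

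Next I count placements. For a fixed gap sequence, the set is determined by $a_1$, and the only constraint is $1\le a_1$ together with $a_1+(k-1+j)\le n$. This gives exactly $n-k-j+1$ choices of $a_1$ whenever that number is positive, i.e.\ whenever $j\le n-k$, and no choices otherwise. Distinct pairs (gap sequence, $a_1$) give distinct subsets, and every connected subset arises this way. Summing over $j$, which ranges over $0\le j\le k-1$ and must satisfy $j\le n-k$, gives
\[
 z_{k,n}=\sum_{j=0}^{\min\{k-1,n-k\}}\binom{k-1}{j}(n-k-j+1).
\]

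The only point needing care is the summation range. When $j=n-k+1$ the factor $n-k-j+1$ vanishes, so truncating at $n-k$ loses nothing. Terms with $j>n-k+1$ would have a negative factor, so they must be excluded rather than counted; the cap $\min\{k-1,n-k\}$ does exactly this. The hypothesis $k\le n$ guarantees that the $j=0$ term, namely $n-k+1\ge1$, is present, so the sum is never empty. There is no real obstacle beyond this bookkeeping.
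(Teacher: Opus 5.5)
Your proof is correct and follows the paper's own argument: you classify connected $k$-subsets by the positions of the $j$ gaps equal to $2$, note that the span is $k-1+j$, and count the $n-k-j+1$ admissible starting values when $j\le n-k$. Your remark on the summation range (the $j=n-k+1$ term vanishes and larger $j$ are excluded) only spells out bookkeeping the paper leaves implicit.
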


\begin{proof}
Let $A=\{a_1<\cdots<a_k\}$ be connected. By Lemma~\ref{lem:gap-criterion}, every gap $d_i=a_{i+1}-a_i$ is either $1$ or $2$. Suppose that exactly $j$ of the $k-1$ gaps are equal to $2$. Their positions can be chosen in $\binom{k-1}{j}$ ways, and then $a_k-a_1=(k-1)+j$, so $a_k=a_1+k+j-1$. Thus $A\subseteq[n]$ if and only if $1\le a_1\le n-k-j+1$. For each gap pattern there are therefore $n-k-j+1$ possible starting values, provided $j\le n-k$. Summing over $0\le j\le\min\{k-1,n-k\}$ proves \eqref{eq:zkn}. The construction is reversible, so no connected set is omitted or counted more than once.
\end{proof}

\subsection{Classification of bad complements}

\begin{proposition}[Complete bad-complement profile]\label{prop:bad-profile}
Let $2\le k\le n-2$. For every integer $m$,
\begin{equation}\label{eq:bad-profile}
 q_m(P_n^2,k)=
 \begin{cases}
 0, & m<k,\\[1mm]
 z_{k,n}, & m=k,\\[1mm]
 n-k, & m=k+1,\\[1mm]
 0, & m\ge k+2,
 \end{cases}
\end{equation}
where $z_{k,n}$ is given by \eqref{eq:zkn}.
\end{proposition}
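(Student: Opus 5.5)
We treat the four cardinality ranges of \eqref{eq:bad-profile} in turn. The case $m<k$ holds by definition. For $m=k$, a $k$-set $C$ is $k$-bad exactly when its only $k$-subset, $C$ itself, induces a connected graph. Lemma~\ref{lem:connected-count} then gives $q_k=z_{k,n}$.

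The substance lies in $m\ge k+1$. Let $C=\{c_1<\cdots<c_m\}$ be $k$-bad with $m\ge k+1$. We extract a disconnected $k$-subset whenever $C$ is not an interval of length $k+1$, checking gaps with Lemma~\ref{lem:gap-criterion}.

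\emph{Step 1: every gap of $C$ equals $1$.} Since $C$ itself contains connected $k$-subsets, it is natural to delete elements. Suppose some gap satisfies $c_{i+1}-c_i\ge2$. Then $C$ has at least $k+1\ge3$ elements, so some element lies outside $\{c_i,c_{i+1}\}$. We delete $m-k\ge1$ elements chosen to include a neighbour of this gap (for example $c_{i-1}$ or $c_{i+2}$) while keeping $c_i$ and $c_{i+1}$.

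We must be careful that the surviving consecutive pair really has difference at least $3$. Take $i$ with $c_{i+1}-c_i\ge2$. If $i+2\le m$, remove $c_{i+1}$ and keep $c_i,c_{i+2}$; their difference is at least $2+1=3$. Otherwise $i\ge2$ (as $m\ge3$), and we remove $c_i$ and keep $c_{i-1},c_{i+1}$, again with difference at least $3$. We then complete the kept pair to a $k$-subset by adding further elements of $C$, none of them strictly between the two kept elements. This is possible because only one element was removed from between them and $m-1\ge k$. The resulting $k$-subset has a gap at least $3$, so it is disconnected, a contradiction. Hence $C$ is an interval $\{a,a+1,\ldots,a+m-1\}$.

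\emph{Step 2: $m\le k+1$.} Suppose $m\ge k+2$. Consider the $k$-subset of $C$ that omits the two interior points $a+1,a+2$, namely $\{a\}\cup\{a+3,\ldots\}$ filled out to $k$ elements from the remaining $m-3\ge k-1$ points. Its gap from $a$ to $a+3$ is $3$, so it is disconnected. Thus $q_m=0$ for $m\ge k+2$.

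\emph{Step 3: intervals of size $k+1$ are bad.} For an interval $\{a,\ldots,a+k\}$, every $k$-subset omits a single point. This creates at most one gap of size $2$, so every $k$-subset is connected by Lemma~\ref{lem:gap-criterion}. The starting point $a$ ranges over $1,\ldots,n-k$, which gives $q_{k+1}=n-k$. The hypothesis $k\le n-2$ ensures these ranges are nonempty and consistent; it is not otherwise needed.

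The main obstacle is Step~1. The deletion must always leave a gap of at least $3$, which requires the case split above according to whether $c_{i+2}$ exists. The point to verify is that deleting one element adjacent to a gap of size at least $2$ always produces a gap of size at least $3$, since the two neighbouring gaps are each at least $1$.
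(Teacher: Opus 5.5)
Your proof is correct and is essentially the paper's argument. Deleting an element adjacent to a gap of size at least $2$ merges two gaps into one of size at least $3$; the paper phrases this as deleting each interior $c_{i+1}$ to force $c_{i+2}-c_i\le 2$. Skipping two elements then rules out every $m\ge k+2$. The only organizational difference is that you first show every $k$-bad set with $m\ge k+1$ is an interval and then exclude long intervals, whereas the paper handles $m=k+s\ge k+2$ directly via $T=\{c_1\}\cup\{c_{s+2},\ldots,c_{k+s}\}$ without needing the interval structure.

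One small repair is needed. Your preliminary sentence about deleting a neighbour such as $c_{i-1}$ or $c_{i+2}$ while keeping both $c_i$ and $c_{i+1}$ does not create a gap of size $3$. For example, with $k=3$ and $C=\{1,2,4,5\}$, deleting $1$ leaves the connected set $\{2,4,5\}$. Drop that sentence in favour of the careful version you give immediately afterwards.
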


\begin{proof}
The case $m<k$ follows directly from the definition of a $k$-bad set. We prove the remaining cases separately.

If $|C|=k$, then $C$ has exactly one $k$-subset, namely $C$ itself. Hence $C$ is $k$-bad if and only if $P_n^2[C]$ is connected. Lemma~\ref{lem:connected-count} therefore gives $q_k(P_n^2,k)=z_{k,n}$.

Now suppose $|C|=k+1$ and write $C=\{c_1<\cdots<c_{k+1}\}$. Assume first that $C$ is $k$-bad. For each $1\le i\le k-1$, delete $c_{i+1}$. In the resulting $k$-subset, the vertices $c_i$ and $c_{i+2}$ are consecutive in increasing order. That $k$-subset must be connected, so Lemma~\ref{lem:gap-criterion} implies $c_{i+2}-c_i\le2$. Since $c_i<c_{i+1}<c_{i+2}$ are distinct integers, one also has $c_{i+2}-c_i\ge2$. Therefore $c_{i+2}=c_i+2$ and necessarily $c_{i+1}=c_i+1$. This holds for every $i$, and hence $c_{i+1}=c_i+1$ for $1\le i\le k$. Thus $C$ is an interval $\{s,s+1,\ldots,s+k\}$.

Conversely, deleting one vertex from an interval of $k+1$ consecutive integers leaves a $k$-subset whose consecutive gaps are all $1$, except possibly for one gap equal to $2$. Lemma~\ref{lem:gap-criterion} shows that every such $k$-subset is connected. Hence the interval is $k$-bad. There are $n-k$ possible intervals, so $q_{k+1}(P_n^2,k)=n-k$.

Finally, let $|C|=k+s$ with $s\ge2$, and write $C=\{c_1<\cdots<c_{k+s}\}$. Take $T=\{c_1\}\cup\{c_{s+2},c_{s+3},\ldots,c_{k+s}\}$. The second set has $k-1$ elements, so $|T|=k$. Its two smallest elements are $c_1$ and $c_{s+2}$. Since the $s$ elements $c_2,\ldots,c_{s+1}$ lie strictly between them, $c_{s+2}-c_1\ge s+1\ge3$. By Lemma~\ref{lem:gap-criterion}, $P_n^2[T]$ is disconnected. Hence $C$ is not $k$-bad, and $q_m(P_n^2,k)=0$ for every $m\ge k+2$.
\end{proof}

\section{Face enumeration and exact Betti numbers}\label{sec:enumeration}

Throughout this section, assume $2\le k\le n-2$ and put $r=n-k\ge2$. The set $\{1,\ldots,k-1,n\}$ induces a disconnected subgraph, because the gap between its last two vertices is $n-(k-1)=r+1\ge3$. Thus $\Delta_k(P_n^2)$ is nonvoid and has a facet of cardinality $r$. Every generating face has this cardinality, so the complex has dimension $r-1$. The bad-complement profile therefore determines its entire face set.

\begin{theorem}[Face enumerator]\label{thm:face-enumerator}
Let $2\le k\le n-2$ and $r=n-k$. Then
\begin{equation}\label{eq:face-enumerator}
 F_{k,n}(x):=\sum_{F\in\Delta_k(P_n^2)}x^{|F|}
 =\sum_{p=0}^{r}\binom np x^p-rx^{r-1}-z_{k,n}x^r.
\end{equation}
Equivalently,
\begin{equation}\label{eq:f-vector}
 f_{p-1}\bigl(\Delta_k(P_n^2)\bigr)=
 \begin{cases}
 \binom np, & 0\le p\le r-2,\\[1mm]
 \binom n{r-1}-r, & p=r-1,\\[1mm]
 \binom nr-z_{k,n}, & p=r.
 \end{cases}
\end{equation}
\end{theorem}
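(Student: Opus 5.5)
The plan is to substitute the bad-complement profile of Proposition~\ref{prop:bad-profile} into the general face enumerator \eqref{eq:general-face-enumerator}. The hypothesis $2\le k\le n-2$ is exactly the range in which that proposition applies. The preceding paragraph showed that $\Delta_k(P_n^2)$ is nonvoid of dimension $r-1$. Hence the sum over face cardinalities $p$ runs over $0\le p\le r=n-k$, and Lemma~\ref{lem:complement-criterion} gives
\[
 f_{p-1}\bigl(\Delta_k(P_n^2)\bigr)=\binom np-q_{n-p}(P_n^2,k)
\]
for each such $p$. It is also worth recording that no face has cardinality larger than $r$. Indeed, a set $F$ with $|F|>r$ has complement of size less than $k$, which cannot contain a $k$-subset. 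So the truncation at $p=r$ in \eqref{eq:face-enumerator} loses nothing.

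The key step is the reindexing $m=n-p$. As $p$ ranges over $0,\ldots,r$, the integer $m$ ranges over $n,\ldots,k$, so $m\ge k$ throughout and the case $m<k$ of \eqref{eq:bad-profile} never occurs. Reading off the three remaining cases gives the following.
\begin{itemize}
 \item For $p=r$, we have $m=k$ and $q_k=z_{k,n}$.
 \item For $p=r-1$, we have $m=k+1$ and $q_{k+1}=n-k=r$.
 \item For $0\le p\le r-2$, we have $m\ge k+2$ and $q_m=0$.
\end{itemize}
The value $p=r-1\ge1$ is legitimate because $r\ge2$, so the three cases are disjoint and exhaust $0\le p\le r$. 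This yields \eqref{eq:f-vector}. Multiplying by $x^p$ and summing then gives \eqref{eq:face-enumerator}, since $\sum_{p=0}^r\binom np x^p$ already includes the two top terms and we only subtract $rx^{r-1}$ and $z_{k,n}x^r$.

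There is no real obstacle here; the argument is bookkeeping. The only points needing care are the following. First, check that the index correspondence $p\leftrightarrow m$ is exact at the boundary $r=2$, where $p=r-2=0$ and $m=n$ falls in the case $m\ge k+2$; this is consistent because the empty face is present. Second, confirm that the empty-face convention of \eqref{eq:general-face-enumerator} matches the one used here, so that the constant term $\binom n0=1$ is correct.
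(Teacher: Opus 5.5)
Your proposal is correct and follows the paper's proof: both substitute the bad-complement profile of Proposition~\ref{prop:bad-profile} into \eqref{eq:general-face-enumerator} via $m=n-p$, so nonzero corrections occur only at $p=r-1$ (value $r$) and $p=r$ (value $z_{k,n}$). Your extra checks of the boundary case $r=2$ and of the empty face are consistent with the paper's conventions.
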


\begin{proof}
Apply \eqref{eq:general-face-enumerator} to $G=P_n^2$. A face of cardinality $p$ has a complement of cardinality $n-p$. By Proposition~\ref{prop:bad-profile}, a nonzero correction can occur only when $n-p=k+1$ or $n-p=k$, equivalently when $p=r-1$ or $p=r$. At $p=r-1$, the correction is $q_{k+1}(P_n^2,k)=n-k=r$; at $p=r$, it is $q_k(P_n^2,k)=z_{k,n}$. This gives \eqref{eq:face-enumerator}, and coefficient extraction gives \eqref{eq:f-vector}.
\end{proof}

\begin{corollary}[Lower skeleton and top nonfaces]\label{cor:lower-skeleton}
Under the hypotheses of Theorem~\ref{thm:face-enumerator}, the complex $\Delta_k(P_n^2)$ contains the full $(r-3)$-skeleton of the simplex on $[n]$, with the standard convention that the full $(-1)$-skeleton consists of the empty face. Its nonfaces of cardinality $r-1$ are exactly the complements of intervals of $k+1$ consecutive vertices, and its nonfaces of cardinality $r$ are exactly the complements of connected $k$-subsets of $P_n^2$.
\end{corollary}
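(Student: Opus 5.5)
The plan is to read all three assertions of Corollary~\ref{cor:lower-skeleton} off the complement criterion (Lemma~\ref{lem:complement-criterion}) and the bad-complement profile (Proposition~\ref{prop:bad-profile}). The $f$-vector formula \eqref{eq:f-vector} alone only gives counts, so we will argue directly about which sets are faces. Throughout, fix $F\subseteq[n]$ with $|F|=p\le r$ and put $C=[n]\setminus F$, so that $|C|=n-p\ge k$. By Lemma~\ref{lem:complement-criterion}, $F$ is a face of $\Delta_k(P_n^2)$ exactly when $C$ contains a disconnected $k$-subset. Equivalently, $F$ is a nonface exactly when $C$ is $k$-bad in the sense defined before Proposition~\ref{prop:bad-profile}.

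For the skeleton claim, take $p\le r-2$. Then $|C|=n-p\ge k+2$, and the last case of Proposition~\ref{prop:bad-profile} shows that $C$ is not $k$-bad. The set $C$ therefore contains a disconnected $k$-subset, so $F$ is a face. Every subset of cardinality at most $r-2$ is thus a face, which is the full $(r-3)$-skeleton. When $r=2$ this reduces to the empty face, and its membership follows from nonvoidness, noted at the start of Section~\ref{sec:enumeration}.

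For the nonfaces of cardinality $r-1$, we have $|C|=k+1$. The $m=k+1$ part of the proof of Proposition~\ref{prop:bad-profile} shows more than the count $n-k$. It characterizes the $k$-bad sets of this size as precisely the intervals $\{s,\ldots,s+k\}$. Hence $F$ is a nonface if and only if $C$ is such an interval, that is, if and only if $F$ is the complement of an interval of $k+1$ consecutive vertices.

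For the nonfaces of cardinality $r$, we have $|C|=k$. Such a $C$ is $k$-bad if and only if $P_n^2[C]$ is connected, so the top nonfaces are exactly the complements of connected $k$-subsets. There is no real obstacle in the argument. The only point needing care is that the interval statement relies on the characterization proved inside Proposition~\ref{prop:bad-profile}, not merely on its numerical conclusion, and we will say this explicitly.
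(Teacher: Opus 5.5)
Your proposal is correct and matches the paper's proof: the paper also reads the interval and connected-set characterizations off Proposition~\ref{prop:bad-profile}, relying, as you note, on the characterization established in its proof. For the skeleton claim the paper cites the count $f_{p-1}=\binom np$ for $p\le r-2$, which is the counting form of your direct appeal to the $m\ge k+2$ case; your separate appeal to nonvoidness when $r=2$ is harmless but unnecessary, since $|C|=n=k+2$ already falls under that case.
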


\begin{proof}
The equality $f_{p-1}=\binom np$ for $p\le r-2$ proves the first assertion. If $|F|=r-1$, then $|[n]\setminus F|=k+1$, and Proposition~\ref{prop:bad-profile} identifies the bad complements as the consecutive intervals. If $|F|=r$, then its complement has cardinality $k$ and is bad exactly when it is connected.
\end{proof}

We next compute the reduced Euler characteristic. We use the elementary partial-sum identity
\begin{equation}\label{eq:partial-binomial-sum}
 \sum_{p=0}^{d}(-1)^p\binom np=(-1)^d\binom{n-1}{d},
 \qquad 0\le d<n.
\end{equation}

\begin{proposition}[Euler characteristic]\label{prop:euler}
Let $2\le k\le n-2$ and $r=n-k$. Then
\begin{equation}\label{eq:euler}
 \widetilde\chi\bigl(\Delta_k(P_n^2)\bigr)
 =(-1)^{r-1}\left(\binom{n-1}{k-1}-z_{k,n}+r\right).
\end{equation}
\end{proposition}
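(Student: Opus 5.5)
The plan is to evaluate the face enumerator of Theorem~\ref{thm:face-enumerator} at $x=-1$ and use $\widetilde\chi(\Delta_k(P_n^2))=-F_{k,n}(-1)$. This identity is valid because, as noted at the start of this section, the complex is nonvoid when $r\ge2$, so it contains the empty face. Substituting $x=-1$ into \eqref{eq:face-enumerator} gives
\[
 F_{k,n}(-1)=\sum_{p=0}^{r}(-1)^p\binom np-r(-1)^{r-1}-z_{k,n}(-1)^r .
\]

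The first sum is handled by the partial-sum identity \eqref{eq:partial-binomial-sum} with $d=r$. This is admissible since $0\le r=n-k<n$, because $k\ge2$. It yields $(-1)^r\binom{n-1}{r}$, and by symmetry $\binom{n-1}{r}=\binom{n-1}{n-1-r}=\binom{n-1}{k-1}$. Collecting terms,
\[
 F_{k,n}(-1)=(-1)^r\left(\binom{n-1}{k-1}+r-z_{k,n}\right),
\]
using $-r(-1)^{r-1}=r(-1)^r$. Negating gives $(-1)^{r-1}\bigl(\binom{n-1}{k-1}-z_{k,n}+r\bigr)$, which is \eqref{eq:euler}.

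There is no real obstacle. The only points needing care are the sign bookkeeping for the two correction terms and confirming the hypotheses of \eqref{eq:partial-binomial-sum}. If a self-contained justification of that identity is wanted, I would prove it by induction on $d$ using Pascal's rule $\binom np=\binom{n-1}{p}+\binom{n-1}{p-1}$, which makes the alternating sum telescope.
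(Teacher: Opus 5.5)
Your proposal is correct and is essentially the paper's own proof. You evaluate $F_{k,n}$ at $-1$ via $\widetilde\chi=-F_{k,n}(-1)$, apply the partial-sum identity with $d=r$ (admissible because $k\ge2$ gives $r<n$), and finish with $\binom{n-1}{r}=\binom{n-1}{k-1}$, with the sign bookkeeping done correctly.
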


\begin{proof}
By the empty-face convention and Theorem~\ref{thm:face-enumerator},
\begin{align*}
 \widetilde\chi\bigl(\Delta_k(P_n^2)\bigr)
 &=-F_{k,n}(-1)\\
 &=-\sum_{p=0}^{r}(-1)^p\binom np+r(-1)^{r-1}+z_{k,n}(-1)^r.
\end{align*}
Because $k\ge2$, one has $r<n$, so \eqref{eq:partial-binomial-sum} applies with $d=r$. Hence
\begin{align*}
 \widetilde\chi\bigl(\Delta_k(P_n^2)\bigr)
 &=-(-1)^r\binom{n-1}{r}+r(-1)^{r-1}+z_{k,n}(-1)^r\\
 &=(-1)^{r-1}\left(\binom{n-1}{r}-z_{k,n}+r\right).
\end{align*}
Finally, $\binom{n-1}{r}=\binom{n-1}{k-1}$ because $r=n-k$.
\end{proof}

The only external topological input needed for the main formula is the following theorem.

\begin{theorem}[Bayer--Denker--Jeli\'c Milutinovi\'c--Sundaram--Xue \cite{BayerCutII2025}]\label{thm:external}
If $k\ge2$ and $n\ge k+3$, then $\Delta_k(P_n^2)$ is shellable and homotopy equivalent to a wedge of spheres of dimension $n-k-1$.
\end{theorem}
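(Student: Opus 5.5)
Since the face structure of $\Delta_k(P_n^2)$ is now completely explicit (Theorem~\ref{thm:face-enumerator} and Corollary~\ref{cor:lower-skeleton}), I would prove shellability directly by exhibiting an order on the facets. The wedge-of-spheres statement then follows from the standard fact that a pure shellable $d$-dimensional complex is homotopy equivalent to a wedge of $d$-spheres. Here $d=r-1=n-k-1$, and purity holds because every generating face has cardinality $r$.

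Index facets by their complements. Facets are $F_T=[n]\setminus T$ with $T\in\D_k(P_n^2)$, and I would order them by the lexicographic order of $T$ written increasingly. The shelling condition for $F_{T}$ against an earlier $F_{T'}$ asks for an earlier $F_{T''}$ with
\[
F_{T'}\cap F_T\subseteq F_{T''}\cap F_T
\quad\text{and}\quad
|F_{T''}\cap F_T|=r-1 .
\]
In complement language, this means $T''=T\setminus\{a\}\cup\{b\}$ with $b\in T'\setminus T$ and $a\in T\setminus T'$. I would take $b$ to be the least element of the symmetric difference $T\triangle T'$, which lies in $T'$ because $T'<T$. Any $a\in T\setminus T'$ then satisfies $a>b$, so $T''$ is automatically lex-earlier. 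The whole proof therefore reduces to one combinatorial exchange claim: some $a\in T\setminus T'$ can be chosen so that $T\setminus\{a\}\cup\{b\}$ still has a gap $\ge3$, in the sense of Lemma~\ref{lem:gap-criterion}.

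To prove the exchange claim, fix a gap $t_i<t_{i+1}$ of $T$ with $t_{i+1}-t_i\ge3$, and split into cases according to where $b$ falls. If $b<t_i$, remove an element $a\in T\setminus T'$ that is not an endpoint of that gap. Such an element exists unless $T\setminus T'\subseteq\{t_i,t_{i+1}\}$; in that exceptional case I would argue directly that either inserting $b$ creates a new gap $\ge3$, or $T'$ itself forces another admissible choice. If $b$ lands inside the gap $(t_i,t_{i+1})$, the insertion may destroy that gap. Then I would pick $a$ so that a gap reappears: removing $t_{i+1}$ (when it lies in $T\setminus T'$) together with $b\le t_{i+1}-1$ should leave a new large gap further right. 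Alternatively, I would use another gap of $T$, or a gap of $T'$ transported to $T''$.

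I expect the exchange lemma to be the main obstacle. This is exactly where the hypothesis $n\ge k+3$ (that is, $r\ge3$) must enter: for $r=2$ there are too few spare vertices, and the gap can be irreparably destroyed. If the case analysis becomes unwieldy, my fallback would be vertex decomposability by induction on $n$, using the shedding vertex $n$. Its deletion and link are again cut-type complexes, namely $\Delta_k(P_{n-1}^2)$ and a complex built from $k$-subsets containing $n$. I would verify their purity and the shedding condition from the gap criterion, with the small diagonal $n=k+3$ as the base case.
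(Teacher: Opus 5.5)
\textbf{There is a genuine gap: the shelling order you propose is not a shelling order, so the exchange lemma your proof reduces to is false.} For comparison, the paper does not prove this statement at all. It quotes it from Bayer et al.\ and uses it only to convert the Euler characteristic of Proposition~\ref{prop:euler} into $\beta(k,n)$.

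Here is a counterexample to the lexicographic order on complements. Take $k=3$, $n=6$ (so $r=3$), and write $abc$ for $\{a,b,c\}$. The disconnected $3$-sets in lex order are $125,126,136,145,146,156,236,256$. For $T=\{1,4,5\}$ and $T'=\{1,3,6\}$ you get $b=3$ and $T\setminus T'=\{4,5\}$. But $\{1,3,5\}$ and $\{1,3,4\}$ are both connected by Lemma~\ref{lem:gap-criterion}. Your restriction $a\in T\setminus T'$ is stronger than needed: the shelling condition only requires $b\in T'\setminus T$ and some $a\in T$. Relaxing it does not help, though:
\begin{itemize}
\item with $b=3$ the candidates $345,135,134$ are all connected;
\item with $b=6$ the candidate $456$ is connected, while $156$ and $146$ are disconnected but come after $145$.
\end{itemize}
Geometrically, $F_T=\{2,3,6\}$ meets the earlier facets $\{3,4,6\},\{3,4,5\},\{2,4,5\}$ in the edge $\{3,6\}$ plus the isolated vertex $2$, which is not pure.

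Connectivity and lex order do not depend on $n$, so the same pair breaks the lex order for every $n\ge6$ when $k=3$. The obstruction therefore has nothing to do with $r=2$ or with an unfinished case split. The complex is shellable; for instance, the complement order $156,126,136,125,146,145,236,256$ works here. What is missing is a genuinely different order together with a proof that it is a shelling, and that is exactly what the cited work supplies.

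Your fallback is not set up correctly either. Since $F\cup\{n\}\in\Delta_k(P_n^2)$ exactly when $[n-1]\setminus F$ contains a disconnected $k$-set, $\Delta_k(P_{n-1}^2)$ is the \emph{link} of $n$, of dimension $r-2$, not the deletion. The deletion is generated by the sets $[n]\setminus T$ with $T$ disconnected and $n\in T$. It has dimension $r-1$ and contains facets such as $[n-1]\setminus\{1,\dots,k-1\}$, which come from connected $(k-1)$-sets. So it is not a cut complex of $P_{n-1}^2$, and your induction would have to run over a wider class of complexes, closed under links and deletions, that you have not identified. The final step (pure and shellable implies a wedge of $(r-1)$-spheres) is fine. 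But the entire content of the statement lies in the shelling, and the proposal does not supply one.
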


\begin{proof}[Proof of Theorem~\ref{thm:main}]
Let $r=n-k\ge3$. By Theorem~\ref{thm:external}, all reduced homology groups of $\Delta_k(P_n^2)$ vanish except possibly in degree $r-1$, and the top group is free. Hence $\widetilde\chi(\Delta_k(P_n^2))=(-1)^{r-1}\beta(k,n)$. Combining this identity with Proposition~\ref{prop:euler} and cancelling $(-1)^{r-1}$ gives $\beta(k,n)=\binom{n-1}{k-1}-z_{k,n}+r$. Substitution of the value of $z_{k,n}$ from Lemma~\ref{lem:connected-count} proves \eqref{eq:main-formula}. Replacing $n$ by $k+r$ gives \eqref{eq:main-diagonal-formula}; its upper summation limit may be written as $r$ because $\binom{k-1}{j}=0$ for $j>k-1$.
\end{proof}

The same Euler-characteristic expression also covers the boundary $r=2$, although $\beta(k,n)$ was defined above only for $r\ge3$. Indeed, \cite[Example 4.3]{BayerCutII2025} shows that $\Delta_k(P_{k+2}^2)$ is a contractible one-dimensional complex, and direct substitution gives $\binom{k+1}{2}-3-2(k-1)-\binom{k-1}{2}+2=0$. For $r=3$, Theorem~\ref{thm:main} yields $\beta(k,k+3)=\binom{k-1}{2}$; for $k\ge3$, this agrees with \cite[Theorem 4.9]{BayerCutII2025}. These boundary checks also verify the normalization of the reduced Euler characteristic.

Because Theorem~\ref{thm:external} gives an actual wedge of spheres, the integral top homology is free. Thus $\dim_{\kk}\widetilde H_{r-1}(\Delta_k(P_n^2);\kk)=\operatorname{rank}_{\mathbb Z}\widetilde H_{r-1}(\Delta_k(P_n^2);\mathbb Z)$ for every field $\kk$.

\section{Diagonal polynomials, finite differences, and generating functions}\label{sec:diagonals}

\subsection{The diagonal polynomial}

For an indeterminate $x$ and an integer $m\ge0$, write $\binom{x}{m}=x(x-1)\cdots(x-m+1)/m!$, with the empty product interpreted as $1$ when $m=0$. Fix $r\ge3$ and define the polynomial
\begin{equation}\label{eq:Bhat}
 \widehat B_r(x)=\binom{x+r-1}{r}-\sum_{j=0}^{r}(r-j+1)\binom{x-1}{j}+r.
\end{equation}
For every integer $k\ge2$, Theorem~\ref{thm:main} gives $\widehat B_r(k)=\beta(k,k+r)$. We use the notation $B_r(k)=\beta(k,k+r)$ only for the topologically defined sequence $k\ge2$ and retain the hat when discussing the polynomial extension.

\begin{lemma}[Degree and leading coefficient]\label{lem:degree}
For every $r\ge3$, the polynomial $\widehat B_r(x)$ has degree exactly $r-1$, and
\begin{equation}\label{eq:leading-coefficient}
 [x^{r-1}]\widehat B_r(x)=\frac{r-2}{(r-1)!}.
\end{equation}
\end{lemma}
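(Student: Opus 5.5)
We compute the coefficients of $x^r$ and $x^{r-1}$ in \eqref{eq:Bhat} one term at a time. The aim is to show that the $x^r$ terms cancel and that the $x^{r-1}$ coefficient is the nonzero number $(r-2)/(r-1)!$.

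The constant $r$ only affects the constant term. In the sum $\sum_{j=0}^{r}(r-j+1)\binom{x-1}{j}$, the summand for index $j$ has degree $j$. So only $j=r$ and $j=r-1$ matter.

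\textbf{Step 1: degree $r$ cancels.} The leading term of $\binom{x+r-1}{r}$ is $x^r/r!$. The summand $j=r$ has weight $r-r+1=1$, and $\binom{x-1}{r}$ also has leading term $x^r/r!$. These cancel, so $\deg\widehat B_r\le r-1$.

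\textbf{Step 2: the $x^{r-1}$ coefficient.} Use the expansion
\[
\binom{x+a}{m}=\frac{1}{m!}\Bigl(x^m+\bigl(ma-\tbinom m2\bigr)x^{m-1}+\cdots\Bigr),
\]
which comes from $\prod_{i=0}^{m-1}(x+a-i)$.

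With $m=r$ and $a=r-1$, the $x^{r-1}$ coefficient of $\binom{x+r-1}{r}$ is
\[
\frac{r(r-1)-\binom r2}{r!}=\frac{\binom r2}{r!}=\frac{1}{2(r-2)!}.
\]
With $m=r$ and $a=-1$, the $x^{r-1}$ coefficient of $\binom{x-1}{r}$ is
\[
\frac{-r-\binom r2}{r!}.
\]
The summand $j=r-1$ has weight $2$, and $\binom{x-1}{r-1}$ has leading term $x^{r-1}/(r-1)!$. So it contributes $2/(r-1)!$.

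Putting these together, the $x^{r-1}$ coefficient of $\widehat B_r$ is
\[
\frac{\binom r2}{r!}+\frac{r+\binom r2}{r!}-\frac{2}{(r-1)!}
=\frac{r(r-1)+r-2r}{r!}
=\frac{r(r-2)}{r!}
=\frac{r-2}{(r-1)!}.
\]
This matches \eqref{eq:leading-coefficient}.

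\textbf{Step 3: conclusion.} Since $r\ge3$, the coefficient $(r-2)/(r-1)!$ is nonzero. Hence $\widehat B_r$ has degree exactly $r-1$.

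The only delicate point is the sign bookkeeping in the subleading coefficients of the shifted binomials. As a check, take $r=3$. Then $\widehat B_3(x)=\binom{x-1}{2}$, whose leading coefficient is $1/2=(3-2)/2!$, as required.
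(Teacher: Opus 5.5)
Your proof is correct and takes essentially the same approach as the paper. You cancel the degree-$r$ terms of $\binom{x+r-1}{r}$ and $\binom{x-1}{r}$, then read off the subleading coefficient of $\binom{x+a}{m}$ from the falling-factorial product and add the $-2\binom{x-1}{r-1}$ contribution to obtain $(r-2)/(r-1)!$; your sign bookkeeping and the $r=3$ check are both correct.
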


\begin{proof}
Only the terms $\binom{x+r-1}{r}$ and $-\binom{x-1}{r}$ in \eqref{eq:Bhat} have degree $r$. Both have leading coefficient $1/r!$, so their degree-$r$ terms cancel.

For $m\ge1$ and any constant $a$, expansion of the falling factorial gives
\begin{equation}\label{eq:binomial-next-coefficient}
 [x^{m-1}]\binom{x+a}{m}=\frac{a-(m-1)/2}{(m-1)!}.
\end{equation}
Indeed, the coefficient of $x^{m-1}$ in $\prod_{i=0}^{m-1}(x+a-i)$ is $\sum_{i=0}^{m-1}(a-i)=ma-m(m-1)/2$, and division by $m!$ gives \eqref{eq:binomial-next-coefficient}.

Applying \eqref{eq:binomial-next-coefficient} with $(m,a)=(r,r-1)$ gives $[x^{r-1}]\binom{x+r-1}{r}=(r-1)/(2(r-1)!)$. Applying it with $(m,a)=(r,-1)$ and including the minus sign gives $[x^{r-1}](-\binom{x-1}{r})=(r+1)/(2(r-1)!)$. Among the remaining terms in \eqref{eq:Bhat}, only $-2\binom{x-1}{r-1}$ has degree $r-1$, and its leading coefficient is $-2/(r-1)!$. Therefore
\[
 [x^{r-1}]\widehat B_r(x)=\frac{r-1}{2(r-1)!}+\frac{r+1}{2(r-1)!}-\frac{2}{(r-1)!}
 =\frac{r-2}{(r-1)!}.
\]
This coefficient is nonzero for $r\ge3$, so $\deg\widehat B_r=r-1$.
\end{proof}

\subsection{The recurrence and its sharpness}

For a polynomial or a bi-infinite function $f$, let $(\nabla f)(x)=f(x)-f(x-1)$ be the backward difference. Pascal's identity gives $\nabla\binom{x+a}{m}=\binom{x+a-1}{m-1}$ for $m\ge1$. Together with $\nabla\binom{x+a}{0}=0$, iteration yields, for every integer $s\ge0$,
\begin{equation}\label{eq:iterated-difference}
 \nabla^s\binom{x+a}{m}=
 \begin{cases}
 \binom{x+a-s}{m-s}, & 0\le s\le m,\\[1mm]
 0, & s>m.
 \end{cases}
\end{equation}

\begin{theorem}[Exact finite differences]\label{thm:exact-differences}
For every $r\ge3$, the polynomial $\widehat B_r$ satisfies
\begin{equation}\label{eq:polynomial-differences}
 \nabla^r\widehat B_r(x)=0,
 \qquad
 \nabla^{r-1}\widehat B_r(x)=r-2
\end{equation}
identically in $x$. Consequently, for the actual Betti sequence $B_r(k)=\beta(k,k+r)$,
\begin{equation}\label{eq:topological-differences}
 \nabla^r B_r(k)=0\quad(k\ge r+2),
 \qquad
 \nabla^{r-1}B_r(k)=r-2\quad(k\ge r+1).
\end{equation}
\end{theorem}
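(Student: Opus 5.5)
The plan is to read the polynomial identities directly off Lemma~\ref{lem:degree}, and then transfer them to the Betti sequence by checking which arguments the difference operators reach.

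\emph{Polynomial identities.} By Lemma~\ref{lem:degree}, $\widehat B_r$ has degree exactly $r-1$ and leading coefficient $(r-2)/(r-1)!$. The backward difference lowers the degree of a polynomial by one. It sends a leading term $cx^m$ to $cmx^{m-1}$ plus lower-order terms. Iterating $r-1$ times therefore produces the constant $(r-1)!\cdot\frac{r-2}{(r-1)!}=r-2$, and one further application gives $0$. These two facts are exactly \eqref{eq:polynomial-differences}. If I prefer to avoid the leading-term argument, I can instead apply \eqref{eq:iterated-difference} termwise to \eqref{eq:Bhat}.

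\begin{itemize}
\item For $s=r-1$, the term $\binom{x+r-1}{r}$ contributes $\binom{x}{1}=x$.
\item The term $-\binom{x-1}{r}$ contributes $-\binom{x-r}{1}=-(x-r)$.
\item The term $-2\binom{x-1}{r-1}$ contributes $-2$.
\item The terms with $j\le r-2$, and the constant $r$, are annihilated.
\end{itemize}

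The sum is $x-(x-r)-2=r-2$. Applying $\nabla$ once more gives $\nabla^r\widehat B_r=0$.

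\emph{Transfer to the Betti sequence.} I expand the iterated difference as
\[
\nabla^s f(k)=\sum_{i=0}^{s}(-1)^i\binom{s}{i}f(k-i),
\]
so it only involves the values $f(k),f(k-1),\ldots,f(k-s)$. Theorem~\ref{thm:main} gives $B_r(k)=\widehat B_r(k)$ for every integer $k\ge2$, with $n=k+r\ge k+3$ automatic because $r\ge3$. Hence the two differences agree, $\nabla^s B_r(k)=\nabla^s\widehat B_r(k)$, as soon as $k-s\ge2$.

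\begin{itemize}
\item For $s=r$ the condition $k-s\ge2$ is $k\ge r+2$.
\item For $s=r-1$ it is $k\ge r+1$.
\end{itemize}

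These are precisely the ranges in \eqref{eq:topological-differences}. The recurrence \eqref{eq:original-recurrence} is just $\nabla^r B_r(k)=0$ written out, so it follows on the same range.

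There is no real obstacle. The only point needing care is confirming that Theorem~\ref{thm:main} holds down to $k=2$ on each diagonal, including at $k=2$ where $\min\{k-1,r\}<r$. That is handled by the binomial convention, as already noted in the proof of Theorem~\ref{thm:main}.
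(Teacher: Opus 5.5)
Your proposal is correct and matches the paper's proof. The termwise evaluation giving $x-(x-r)-2=r-2$ and the transfer to the Betti sequence via the condition $k-s\ge2$ are exactly the paper's argument; your primary shortcut through the leading coefficient in Lemma~\ref{lem:degree} is an equivalent variant (the paper instead computes $\nabla^r\widehat B_r=1-1=0$ directly from the two degree-$r$ terms).
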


\begin{proof}
Separate the $j=r$ term in \eqref{eq:Bhat}:
\[
 \widehat B_r(x)=\binom{x+r-1}{r}-\binom{x-1}{r}
 -\sum_{j=0}^{r-1}(r-j+1)\binom{x-1}{j}+r.
\]
By \eqref{eq:iterated-difference}, the $r$th differences of the first two terms are respectively $1$ and $-1$. Every binomial polynomial with lower index at most $r-1$, as well as the constant $r$, is annihilated by $\nabla^r$. Hence $\nabla^r\widehat B_r=1-1=0$.

For the next lower difference, \eqref{eq:iterated-difference} gives $\nabla^{r-1}\binom{x+r-1}{r}=\binom{x}{1}=x$, $\nabla^{r-1}\binom{x-1}{r}=\binom{x-r}{1}=x-r$, and $\nabla^{r-1}\binom{x-1}{r-1}=1$. All terms with lower index at most $r-2$, as well as the constant term, vanish after $r-1$ differences. Since the coefficient of $\binom{x-1}{r-1}$ is $-2$, it follows that $\nabla^{r-1}\widehat B_r(x)=x-(x-r)-2=r-2$. This proves \eqref{eq:polynomial-differences}.

For the topological sequence, $B_r(k-i)$ is defined whenever $k-i\ge2$. Thus the expression $\nabla^rB_r(k)$ is defined for $k\ge r+2$, while $\nabla^{r-1}B_r(k)$ is defined for $k\ge r+1$. On these ranges, every term agrees with the corresponding value of $\widehat B_r$, so \eqref{eq:topological-differences} follows.
\end{proof}

Expanding the first identity in \eqref{eq:topological-differences} gives the strengthened form of the conjecture.

\begin{corollary}[Recurrence conjecture]\label{cor:recurrence}
For every fixed $r\ge3$ and every $k\ge r+2$,
\begin{equation}\label{eq:recurrence}
 \beta(k,k+r)=\sum_{i=1}^{r}(-1)^{i-1}\binom{r}{i}\beta(k-i,k-i+r).
\end{equation}
In particular, \eqref{eq:original-recurrence} holds on the range $k\ge r+3$ stated in \cite[Conjecture 4.10]{BayerCutII2025}.
\end{corollary}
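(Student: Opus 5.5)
The recurrence is just the expanded form of the first identity in \eqref{eq:topological-differences} from Theorem~\ref{thm:exact-differences}. I would prove it by expanding the backward-difference operator and reading off the coefficients.

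First I would record the binomial expansion of the iterated backward difference. For any function $g$ defined at the relevant points,
\[
 (\nabla^r g)(k)=\sum_{i=0}^{r}(-1)^i\binom{r}{i}g(k-i).
\]
This holds because $\nabla=1-E^{-1}$, where $E^{-1}$ is the shift $g(x)\mapsto g(x-1)$, and the two operators commute. The expansion is a finite identity and involves only the values $g(k),g(k-1),\ldots,g(k-r)$.

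Next I would apply this with $g=B_r$, where $B_r(k)=\beta(k,k+r)$. The right-hand side uses $B_r(k-i)$ for $0\le i\le r$, so every term is topologically defined exactly when $k-r\ge2$, that is, when $k\ge r+2$. On that range, Theorem~\ref{thm:exact-differences} gives $\sum_{i=0}^{r}(-1)^i\binom ri B_r(k-i)=0$. Isolating the $i=0$ term gives
\[
 B_r(k)=-\sum_{i=1}^{r}(-1)^i\binom ri B_r(k-i)=\sum_{i=1}^{r}(-1)^{i-1}\binom ri B_r(k-i).
\]
Since $B_r(k-i)=\beta(k-i,k-i+r)$, this is exactly \eqref{eq:recurrence}. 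The range $k\ge r+3$ of \cite[Conjecture 4.10]{BayerCutII2025} lies inside $k\ge r+2$, so \eqref{eq:original-recurrence} follows as a special case.

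There is no genuine obstacle. The only point that needs care is the range of validity. One must check that each $\beta(k-i,k-i+r)$ in the sum satisfies the hypotheses $k-i\ge2$ and $n-k=r\ge3$ of Theorem~\ref{thm:main}. Both hold for $k\ge r+2$ and $r\ge3$, which is exactly why Theorem~\ref{thm:exact-differences} was stated on that range.
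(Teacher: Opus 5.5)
Your proposal is correct and matches the paper's proof: both expand $\nabla^r B_r(k)=0$ from Theorem~\ref{thm:exact-differences} on the range $k\ge r+2$, where every $\beta(k-i,k-i+r)$ is defined, and solve for $B_r(k)$. The paper also checks that the boundary term $\beta(2,r+2)$ equals $0$ at $k=r+2$, but this is only a consistency remark and your argument does not need it.
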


\begin{proof}
For $k\ge r+2$, all indices $k-i$ in \eqref{eq:recurrence} are at least $2$. The identity $\nabla^rB_r(k)=0$ therefore expands as
\[
 B_r(k)-\binom r1B_r(k-1)+\binom r2B_r(k-2)-\cdots+(-1)^rB_r(k-r)=0.
\]
Solving for $B_r(k)$ gives \eqref{eq:recurrence}. At the new boundary $k=r+2$, the last term is $\beta(2,r+2)$. Substituting $k=2$ and $n=r+2$ into \eqref{eq:main-formula} gives $\beta(2,r+2)=(r+1)-[(r+1)+r]+r=0$, so the recurrence remains valid at this endpoint.
\end{proof}

The second identity in \eqref{eq:polynomial-differences} also proves sharpness. Indeed, if $\nabla^s\widehat B_r=0$ for some $0\le s\le r-1$, then applying $\nabla^{r-1-s}$ would give $\nabla^{r-1}\widehat B_r=0$, contradicting $\nabla^{r-1}\widehat B_r=r-2\ne0$. Thus no lower pure power of $\nabla$ annihilates $\widehat B_r$. Nor can such a lower power annihilate the topological sequence eventually: if $\nabla^sB_r(k)=0$ for all sufficiently large $k$, then the polynomial $\nabla^s\widehat B_r(x)$ has infinitely many integer zeros and must vanish identically. Algebraically, the degree-$r$ terms of $\binom{x+r-1}{r}$ and $\binom{x-1}{r}$ cancel, whereas the remaining degree-$(r-1)$ coefficient, including the contribution from $-2\binom{x-1}{r-1}$, is nonzero. The size-$(k+1)$ bad-complement term contributes only the constant $r$ to \eqref{eq:Bhat}; every positive-order difference annihilates this term, so it does not determine the recurrence order.

\subsection{Generating functions}

All generating functions in this subsection are interpreted as formal power series. The finite-difference statement has a precise one-sided generating-function formulation. Let $(a_k)_{k\ge1}$ be any sequence, extend it by $a_k=0$ for $k\le0$, and put $A(x)=\sum_{k\ge1}a_kx^k$. Then, for every $k\ge1$,
\begin{equation}\label{eq:gf-coefficient-difference}
 [x^k](1-x)^rA(x)=\sum_{i=0}^{r}(-1)^i\binom ri a_{k-i}=\nabla^ra_k.
\end{equation}
Consequently, for an integer $K\ge1$, the relation $\nabla^ra_k=0$ for every $k\ge K$ holds if and only if $(1-x)^rA(x)$ is a polynomial of degree at most $K-1$. For a rational generating function $A(x)$, merely requiring its reduced denominator to divide $(1-x)^r$ implies only eventual validity of the recurrence; a bound on the numerator degree is needed to determine the least eventual starting index. The degree condition is essential: $A(x)=x^{r+1}/(1-x)^r$ has denominator $(1-x)^r$, but the coefficient of $x^{r+1}$ in $(1-x)^rA(x)$ is $1$, so $\nabla^r a_{r+1}\ne0$.

Now define $G_r(x)=\sum_{k\ge1}\widehat B_r(k)x^k$. The standard identity $\sum_{k\ge1}\binom{k+r-1}{r}x^k=x/(1-x)^{r+1}$ applies to the first term. For each $j\ge0$, one likewise has $\sum_{k\ge1}\binom{k-1}{j}x^k=x^{j+1}/(1-x)^{j+1}$. Substitution gives
\begin{equation*}
 G_r(x)=\frac{x}{(1-x)^{r+1}}
 -\sum_{j=0}^{r}(r-j+1)\frac{x^{j+1}}{(1-x)^{j+1}}
 +\frac{rx}{1-x}.
\end{equation*}
Multiplying by $(1-x)^r$ and separating the $j=r$ term yields
\begin{align}
 H_r(x):=(1-x)^rG_r(x)
 &=x(1+x+\cdots+x^{r-1})+rx(1-x)^{r-1}\notag\\
 &\quad-\sum_{j=0}^{r-1}(r-j+1)x^{j+1}(1-x)^{r-j-1}.
 \label{eq:H-polynomial}
\end{align}
Thus $H_r(x)$ is a polynomial of degree at most $r$, and $G_r(x)=H_r(x)/(1-x)^r$. Moreover, evaluating \eqref{eq:H-polynomial} at $x=1$ gives $H_r(1)=r-2$: the first term contributes $r$, only the summand $j=r-1$ survives in the final sum, and it contributes $2$. Hence the reduced denominator is exactly $(1-x)^r$ for $r\ge3$, in agreement with $\deg\widehat B_r=r-1$.

The first examples are
\begin{equation*}
 G_3(x)=\frac{x^3}{(1-x)^3},\qquad
 G_4(x)=\frac{x^3(3-x)}{(1-x)^4},\qquad
 G_5(x)=\frac{x^3(6-5x+2x^2)}{(1-x)^5}.
\end{equation*}
It remains to identify the least eventual starting index of this one-sided recurrence. Since $\binom{r-1}{r}=0$ and $\binom{-1}{j}=(-1)^j$, evaluation of \eqref{eq:Bhat} at $0$ gives
\begin{equation*}
 \widehat B_r(0)=r-\sum_{j=0}^{r}(r-j+1)(-1)^j
 =\left\lfloor\frac{r-1}{2}\right\rfloor.
\end{equation*}
Indeed, if $r=2q+1$, pairing adjacent terms gives $q+1$, whereas if $r=2q$, the same pairing gives $q$ together with the final term $1$; in both cases the alternating sum is $\lceil(r+1)/2\rceil$. In particular, $\widehat B_r(0)>0$ for $r\ge3$.

To distinguish the zero extension from the polynomial extension, set $a_k^{(r)}=\widehat B_r(k)$ for $k\ge1$ and $a_k^{(r)}=0$ for $k\le0$. The coefficient of $x^r$ in $H_r(x)$ is then
\begin{align*}
 [x^r]H_r(x)
 &=\sum_{i=0}^{r}(-1)^i\binom ri a_{r-i}^{(r)}
 =\sum_{i=0}^{r-1}(-1)^i\binom ri\widehat B_r(r-i)\\
 &=-(-1)^r\widehat B_r(0)
 =(-1)^{r+1}\left\lfloor\frac{r-1}{2}\right\rfloor.
\end{align*}
The penultimate equality is obtained from the polynomial identity $\nabla^r\widehat B_r(r)=0$ by deleting its $i=r$ term, namely $(-1)^r\widehat B_r(0)$. Hence $[x^r]H_r(x)\ne0$, and therefore $\deg H_r=r$. By \eqref{eq:gf-coefficient-difference}, the zero-extended sequence $(a_k^{(r)})_{k\in\mathbb Z}$ satisfies $\nabla^r a_k^{(r)}=0$ for every $k\ge r+1$, whereas $\nabla^r a_r^{(r)}\ne0$. Consequently, $r+1$ is the least integer $K$ for which the recurrence holds for every $k\ge K$. The polynomial identity in Theorem~\ref{thm:exact-differences} is stronger because it uses the full polynomial extension rather than the zero extension implicit in a one-sided generating function.

\section{Combinatorial and algebraic consequences}\label{sec:consequences}

\subsection{\texorpdfstring{Explicit diagonal and fixed-$k$ formulas}{Explicit diagonal and fixed-k formulas}}

Expanding \eqref{eq:Bhat} gives the first diagonal polynomials
\begin{align*}
 \widehat B_3(k)&=\frac{k^2-3k+2}{2}=\binom{k-1}{2},\\
 \widehat B_4(k)&=\frac{2k^3-3k^2-5k+6}{6},\\
 \widehat B_5(k)&=\frac{3k^4-2k^3+9k^2-58k+48}{24},\\
 \widehat B_6(k)&=\frac{k^5+15k^3-30k^2-46k+60}{30}.
\end{align*}
For example,
\[
 \widehat B_4(k)=\binom{k+3}{4}-5-4(k-1)-3\binom{k-1}{2}
 -2\binom{k-1}{3}-\binom{k-1}{4}+4,
\]
which simplifies to the displayed formula for $\widehat B_4(k)$. Its fourth backward difference therefore vanishes; the same conclusion holds for every fixed $r$ by Theorem~\ref{thm:exact-differences}.

For comparison with the data in \cite{BayerCutII2025}, the first entries are
\begin{center}
\begin{tabular}{c|rrrrrrrr}
\toprule
$r\backslash k$ & $3$ & $4$ & $5$ & $6$ & $7$ & $8$ & $9$ & $10$\\
\midrule
$3$ & $1$ & $3$ & $6$ & $10$ & $15$ & $21$ & $28$ & $36$\\
$4$ & $3$ & $11$ & $26$ & $50$ & $85$ & $133$ & $196$ & $276$\\
$5$ & $6$ & $25$ & $67$ & $145$ & $275$ & $476$ & $770$ & $1182$\\
$6$ & $10$ & $46$ & $136$ & $324$ & $674$ & $1274$ & $2240$ & $3720$\\
\bottomrule
\end{tabular}
\end{center}

We next keep $k$ fixed. For $k=4$ and $n\ge7$, Theorem~\ref{thm:main} gives
\begin{align*}
 \beta(4,n)
 &=\binom{n-1}{3}-\bigl[(n-3)+3(n-4)+3(n-5)+(n-6)\bigr]+(n-4)\\
 &=\binom{n-1}{3}-7n+32.
\end{align*}
Put $u=n-7$. Vandermonde's identity gives $\binom{u+6}{3}=20+15u+6\binom u2+\binom u3$. Since $\binom{n-1}{3}-7n+32=\binom{u+6}{3}-7u-17$, one obtains \eqref{eq:intro-k4}.

For $k=5$ and $n\ge8$, the same calculation gives
\begin{align*}
 \beta(5,n)
 &=\binom{n-1}{4}-\bigl[(n-4)+4(n-5)+6(n-6)+4(n-7)+(n-8)\bigr]+(n-5)\\
 &=\binom{n-1}{4}-15n+91.
\end{align*}
With $v=n-8$, Vandermonde's identity yields $\binom{v+7}{4}=35+35v+21\binom v2+7\binom v3+\binom v4$. Because $\binom{n-1}{4}-15n+91=\binom{v+7}{4}-15v-29$, this proves \eqref{eq:intro-k5}. Thus both identities in \cite[Conjecture 4.11]{BayerCutII2025} hold on their full stated ranges.

\subsection{Layerwise description of nonfaces}

Assume $2\le k\le n-2$ and put $r=n-k\ge2$. For $1\le s\le r$, set $I_s=\{s,s+1,\ldots,s+k\}$. By Proposition~\ref{prop:bad-profile}, the nonfaces of cardinality $r-1$ are exactly
\begin{equation}\label{eq:M-rminus1}
 \mathcal M_{r-1}=\{[n]\setminus I_s:1\le s\le r\},
 \qquad |\mathcal M_{r-1}|=r.
\end{equation}

For $0\le j\le\min\{k-1,r\}$, let $\mathcal Z_j$ be the family of connected $k$-subsets $C=\{c_1<\cdots<c_k\}$ having exactly $j$ gaps $c_{i+1}-c_i$ equal to $2$. Lemma~\ref{lem:connected-count} gives $|\mathcal Z_j|=\binom{k-1}{j}(r-j+1)$. The nonfaces of cardinality $r$ decompose disjointly as
\begin{equation}\label{eq:M-r}
 \mathcal M_r=\bigsqcup_{j=0}^{\min\{k-1,r\}}
 \{[n]\setminus C:C\in\mathcal Z_j\}.
\end{equation}
Every $C\in\mathcal Z_j$ has span $\max C-\min C+1=k+j$, so $j$ records both the number of long gaps and the excess of the span over the cardinality.

Equations \eqref{eq:M-rminus1}--\eqref{eq:M-r} give the layerwise form of the face enumerator:
\[
 F_{k,n}(x)=\sum_{p=0}^{r}\binom np x^p-|\mathcal M_{r-1}|x^{r-1}-|\mathcal M_r|x^r.
\]
When $r\ge3$, evaluation at $x=-1$ and Theorem~\ref{thm:external} give the equivalent correction formula
\begin{equation}\label{eq:beta-layered}
 \beta(k,n)=\binom{n-1}{r}+|\mathcal M_{r-1}|-|\mathcal M_r|.
\end{equation}
Thus the $r$ interval complements contribute the constant $+r$, whereas the connected $k$-complements contribute the subtraction $-z_{k,n}$. In the diagonal polynomial, the constant interval contribution has no effect on any positive-order finite difference.

\subsection{\texorpdfstring{The $h$-polynomial and Hilbert series}{The h-polynomial and Hilbert series}}

Let $\Delta=\Delta_k(P_n^2)$, assume $k\ge2$ and $n\ge k+3$, and put $r=n-k$. Since $\Delta$ has dimension $r-1$, its $h$-polynomial is $h_{\Delta}(t)=(1-t)^rF_{k,n}(t/(1-t))=\sum_{i=0}^{r}h_it^i$. Under this substitution, the correction term $-rx^{r-1}$ becomes $-rt^{r-1}(1-t)$, while $-z_{k,n}x^r$ becomes $-z_{k,n}t^r$. Thus \eqref{eq:face-enumerator} gives
\begin{equation}\label{eq:h-polynomial}
 h_{\Delta}(t)=\sum_{p=0}^{r}\binom np t^p(1-t)^{r-p}
 -rt^{r-1}+(r-z_{k,n})t^r.
\end{equation}
Consequently, for $0\le i\le r$,
\begin{equation}\label{eq:h-coefficients}
 h_i=\sum_{p=0}^{i}(-1)^{i-p}\binom np\binom{r-p}{i-p}
 -r\,\mathbf 1_{\{i=r-1\}}+(r-z_{k,n})\,\mathbf 1_{\{i=r\}}.
\end{equation}
Here $\mathbf 1_{\{E\}}$ is $1$ when the condition $E$ holds and $0$ otherwise. For the top coefficient, \eqref{eq:partial-binomial-sum} gives
\begin{align*}
 h_r
 &=\sum_{p=0}^{r}(-1)^{r-p}\binom np+r-z_{k,n}\\
 &=\binom{n-1}{r}+r-z_{k,n}=\beta(k,n).
\end{align*}
This agrees with the standard Cohen--Macaulay interpretation for pure shellable complexes; see \cite{Baclawski1979,BjornerTopological,WachsPoset}.

Let $S=\kk[x_1,\ldots,x_n]$ and let $I_{\Delta}$ be the Stanley--Reisner ideal. The Hilbert series of $\kk[\Delta]=S/I_{\Delta}$ is therefore
\begin{equation}\label{eq:hilbert-series}
 \Hilb(\kk[\Delta],t)=\frac{1}{(1-t)^r}
 \left(\sum_{p=0}^{r}\binom np t^p(1-t)^{r-p}
 -rt^{r-1}+(r-z_{k,n})t^r\right).
\end{equation}
Equivalently, $\Hilb(\kk[\Delta],t)=F_{k,n}(t/(1-t))$. Thus the complementary-set enumeration determines not only the top Betti number but also the full $f$-vector, $h$-vector, and Stanley--Reisner Hilbert series.

\section{Conclusion}\label{sec:conclusion}

The proof reduces the topology of $\Delta_k(P_n^2)$ to a complete extremal classification of complementary sets. For $2\le k\le n-2$ and every integer $m$, the number of $k$-bad $m$-subsets is zero for $m<k$, equals the number $z_{k,n}$ of connected $k$-subsets when $m=k$, equals $n-k$ when $m=k+1$, and is zero for $m\ge k+2$. This profile gives the full face enumerator \eqref{eq:face-enumerator}, the reduced Euler characteristic \eqref{eq:euler}, and the exact Betti formula \eqref{eq:main-formula}.

For fixed $r=n-k\ge3$, the diagonal values are restrictions of the polynomial $\widehat B_r$ of degree exactly $r-1$. Hence $\nabla^r\widehat B_r=0$ and $\nabla^{r-1}\widehat B_r=r-2$. On the actual Betti sequence, the $r$th-order recurrence is valid for every $k\ge r+2$, which is precisely the full range on which all shifted Betti numbers are defined without extending the parameter $k$ below $2$. The one-sided generating function has reduced denominator $(1-x)^r$ and a numerator of degree exactly $r$; equivalently, $r+1$ is the least eventual starting index for the zero-extended recurrence. The same enumeration proves the conjectured formulas for $k=4,5$ and determines the associated Stanley--Reisner enumerators.

\section*{Declaration of Generative AI and AI-Assisted Technologies in the Writing Process}
During the preparation of this article, the authors used DeepSeek to construct a specialized agent for mathematical problem solving and to produce an initial proof of the main theorem. The authors subsequently checked and revised the output and take full responsibility for the content of the article.

\end{document}